\documentclass[letterpaper, 11 pt, conference]{ieeeconf}
\usepackage{amsmath,amssymb,euscript,psfrag,latexsym,graphicx}
\usepackage{bbm,color,amstext,wasysym,subfig,cuted,mathtools}
\usepackage[normalem]{ulem}
\graphicspath{{./},{./figures/}}
\usepackage{dsfont,cite}

\newcommand{\mR}{{\mathbb R}}
\usepackage{algorithm}
\usepackage{algorithmic}
\usepackage[algo2e,linesnumbered,ruled]{algorithm2e}

\newcommand{\mP}{{\mathbb P}}
\newcommand{\bM}{{\mathbf M}}
\newcommand{\bv}{{\mathbf v}}

\newcommand{\bff}{{\mathbf f}}

\newcommand{\bw}{{\mathbf w}}
\newcommand{\bm}{{\mathbf m}}
\newcommand{\bD}{{\mathbf D}}
\newcommand{\mU}{{\mathbb U}}

\newcommand{\bs}{{\mathbf s}}

\newcommand{\bF}{{\mathbf F}}

\newcommand{\bx}{{\mathbf x}}

\newcommand{\bPsi}{{\boldsymbol \Psi}}

\newcommand{\bX}{{\mathbf X}}
\newcommand{\bY}{{\mathbf Y}}
\newcommand{\bg}{{\mathbf g}}

\newcommand{\bu}{{\mathbf u}}

\newcommand{\bc}{{\mathbf c}}

\newtheorem{theorem}{Theorem}
\newtheorem{definition}{Definition}
\newtheorem{lemma}{Lemma}
\newtheorem{assumption}{Assumption}
\newtheorem{remark}[theorem]{Remark}
\newtheorem{property}{Property}

%
\IEEEoverridecommandlockouts
\overrideIEEEmargins


\begin{document}

\title{ \bf Navigation with Probabilistic Safety Constraints: Convex Formulation}

\author{Joseph Moyalan, Yongxin Chen, and Umesh Vaidya
\thanks{J. Moyalan and U. Vaidya are with the Department of Mechanical Enginerring, Clemson University, Clemson SC, USA, jmoyala@clemson.edu, uvaidya@clemson.edu}

\thanks{Y. Chen is with the School of Aerospace Engineering, Georgia Institute of Technology, Atlanta, GA, USA, yongchen@gatech.edu}
}

\maketitle


\begin{abstract}
We consider the problem of navigation with safety constraints. The safety constraints are probabilistic, where a given set is assigned a degree of safety, a number between zero and one, with zero being safe and one being unsafe. The deterministic unsafe set will arise as a particular case of the proposed probabilistic description of safety. We provide a convex formulation to the navigation problem with probabilistic safety constraints. The convex formulation is made possible by lifting the navigation problem in the dual space of density using linear transfer operator theory methods involving Perron-Frobenius and Koopman operators. The convex formulation leads to an infinite-dimensional feasibility problem for probabilistic safety verification and control design. The finite-dimensional approximation of the optimization problem relies on the data-driven approximation of the linear transfer operator.

\end{abstract}


\section{Introduction}
The obstacle avoidance problem is one of the necessities for robots and unmanned vehicles operating in a structured or unstructured environment. The classical approach to obstacles modeling and avoidance is deterministic in nature \cite{fiorini1998motion,hongzhe2021convex,hongzhe2021obs}. However, increasingly there is a shift in focus for obstacle modeling and avoidance to a probabilistic or stochastic setting. This shift in focus is mainly driven by navigation applications in unstructured or off-road terrain where environmental uncertainty is a norm.

The authors of \cite{du2011probabilistic} investigate the probabilistic collision checking between uncertain configurations for two objects, which is referred to as collision chance constraints. They provide a probability of collision that accounts for both robot and obstacle uncertainty. 
In \cite{majumdar2020should}, an investigation of risk metrics, i.e., mapping from a random variable corresponding to costs to a real number, for different probabilistic applications is done. The authors of \cite{majumdar2020should} provide various axioms that help in the selection of risk metrics which is utilized in the cost function. However, finding such risk metrics for everyday applications is, in general, not trivial. In \cite{fan2021step}, the authors tackle the problem of autonomous traverse in an unknown environment by efficient risk and constraint-aware kinodynamic motion planning using sequential quadratic programming-based (SQP) model predictive control (MPC). They propose a tail risk assessment using the Conditional Value-at-Risk (CVaR). However, the problem is formulated in a nonlinear setting with highly non-convex constraints on the robot's motion. In \cite{hacohen2019probability}, a new probabilistic navigation function (PNF) has been proposed to work in a stochastic environment. The proposed PNF provides an analytic proof for convergence and considers both the geometries and the location probability functions without inflating the ambient space dimension. But so far, all the formulations involving navigation with probabilistic descriptions of the obstacle set are formulated as non-convex problems. This non-convex nature of the problem is one of the main challenges in developing systematic analysis and synthesis methods. 


In this paper, we provide a probabilistic description of safety using a probability measure. The probability measure is used to assign a varying degree of safety for any given set based on the value of the probability measure on the set. Larger the value of probability measure on a set more unsafe the set. Furthermore, the deterministic description of an unsafe set will arise as a particular case of the proposed probabilistic description. We consider the problem of navigation with probabilistic safety constraints. 
The main contribution is in providing convex formulation to the verification and control synthesis for the navigation problem. The convex formulation is made possible by lifting the navigation problem in the space of density using methods from linear operator theory involving Perron-Frobenius and Koopman operators \cite{Lasota}. This convex formulation builds on our past work on providing convex formulation to navigation, stabilization, and optimal control problems \cite{vaidya2018optimal,huang2020convex,hongzhe2021convex,moyalan2021sum}.
The convex formulation leads to an infinite-dimensional convex feasibility problem for the probabilistic safety verification and synthesis problems. The infinite-dimensional feasibility problem is approximated using a data-driven method developed for the finite-dimensional approximation of the linear transfer operator.


The rest of the paper is structured as follows. In Section II, we provide a brief introduction to our framework’s necessary preliminaries.  Section  III  consists of a probabilistic description of unsafe sets, and the main theoretical results are described in section IV.  In  Section  V,  we develop the computational framework based on the linear operator framework.  This is followed by simulation examples in Section VI and a conclusion in Section VII.

\section{Preliminaries and Notations}\label{section_prelim}
\noindent {\it Notations}: $\mR^n$ denotes the $n$ dimensional Euclidean space  and $\mR^n_{\geq 0}$ is the positive orthant. Given $\bX\subseteq \mR^n$ and $\bY\subseteq \mR^m$, let ${\cal L}_1(\bX,\bY), {\cal L}_\infty(\bX,\bY)$, and ${\cal C}^k(\bX,\bY)$ denote the space of all real valued integrable functions, essentially bounded functions, and space of $k$ times continuously  differentiable functions mapping from $\bX$ to $\bY$ respectively. If the space $\bY$ is not specified then it is understood that the underlying space is $\mR$. ${\cal B}(\bX)$ denotes the Borel $\sigma$-algebra on $\bX$ and ${\cal M}(\bX)$ is the vector space of real-valued measure on ${\cal B}(\bX)$. $\bs_t(\bx)$ denotes the solution of dynamical system $\dot \bx={\bf F}(\bx)$ starting from initial condition $\bx$. $\bX_0$, $\bX_r$ are assumed to be the initial set and final terminal set respectively. With no loss of generality we will assume that $\bX_r=\{0\}$. Let ${\cal N}_\delta$ be the neighborhood of $\bX_r$ for some fixed $\delta>0$ i.e., $\bX_r\subset {\cal N}_\delta$. Let $\bX_1:=\bX\setminus {\cal N}_
\delta$. Let $h_0(\bx)$ be the probability density function corresponding to the probability measure $\mu_0$ with support on set $\bX_0$.

\subsection{Perron-Frobenius and Koopman Operator}
Consider a dynamical system of the form
\begin{eqnarray}
\dot \bx={\bF}(\bx),\;\;\;\bx\in \bX\subseteq \mathbb{R}^n\label{sys}.
\end{eqnarray}
where the vector field is assumed to be $\bF(\bx)\in {\cal C}^1(\bX,\mR^n)$. 
There are two different ways of lifting the finite dimensional nonlinear dynamics from state space to infinite dimension  space of functions namely using Koopman and Perron-Frobenius operators. The definitions of these operators along with the infinitesimal generators of these operators are defined as follows \cite{Lasota}.
\begin{definition}[Koopman Operator]  $\mathbb{U}_t :{\cal L}_\infty(\bX)\to {\cal L}_\infty(\bX)$ for dynamical system~\eqref{sys} is defined as 
\begin{eqnarray}[\mathbb{U}_t \varphi](\bx)=\varphi(\bs_t(\bx)). \label{koopman_operator}
\end{eqnarray}
The infinitesimal generator for the Koopman operator is given by
\begin{eqnarray}
\lim_{t\to 0}\frac{(\mathbb{U}_t-I)\varphi}{t}=\bF(\bx)\cdot \nabla \varphi(\bx)=:{\cal K}_{\bF} \varphi,\;\;t\geq 0. \label{K_generator}
\end{eqnarray}
\end{definition}

\begin{definition} [Perron-Frobenius Operator] $\mathbb{P}_t:{\cal L}_1(\bX)\to {\cal L}_1(\bX)$ for dynamical system~\eqref{sys} is defined as 
\begin{eqnarray}[\mathbb{P}_t \psi](\bx)=\psi(\bs_{-t}(\bx))\left|\frac{\partial \bs_{-t}(\bx) }{\partial \bx}\right|, \label{PF_operator}
\end{eqnarray}
where $\left|\cdot \right|$ stands for the determinant. The infinitesimal generator for the P-F operator is given by 
\begin{eqnarray}
\lim_{t\to 0}\frac{(\mathbb{P}_t-I)\psi}{t}&=-\nabla \cdot (\bF(\bx) \psi(\bx)) \nonumber \\
&=: {\cal P}_{\bF}\psi,\;\;t\geq 0. \label{PF_generator}
\end{eqnarray}
\end{definition}
These two operators are dual to each other where the duality is expressed as follows.
\begin{eqnarray*}
\int_{\bX}[\mU_t \varphi](\bx)\psi(\bx)d\bx=
\int_{\bX}[\mathbb{P}_t \psi](\bx)\varphi(\bx)d\bx.
\end{eqnarray*}
\begin{property}\label{property}
These two operators enjoy positivity and Markov properties which are used in the finite dimension approximation of these operators. 
\begin{enumerate}
    \item Positivity: The P-F and Koopman operators are positive operators i.e., for any $0\leq\varphi(\bx)\in {\cal L}_\infty(\bX)$ and $0\leq \psi(\bx)\in {\cal L}_1(\bX)$, we have
    
    \begin{equation}\label{positive_prop}
        [\mathbb{P}_t\psi](\bx)\geq 0,\;\;\;\;[\mathbb{U}_t\varphi](\bx)\geq 0,\;\;\;\forall t\geq 0.
    \end{equation}
    \item Markov Property:  The P-F operator satisfies Markov property i.e.,
\begin{equation}\label{Markov_prop}
    \int_\bX [\mathbb{P}_t\psi](\bx)d\bx=\int_\bX\psi(\bx)d\bx.
\end{equation}
\end{enumerate}
\end{property}
\vspace{0.1in}
\begin{assumption}\label{assume_localstability} We assume that $\bx=0$ is locally stable equilibrium point for the system (\ref{sys}) with local domain of attraction denoted by ${\cal N}_\delta$ for a fixed $\delta>0$. We let $\bX_1:=\bX\setminus {\cal N}_\delta$
\end{assumption}

\begin{definition}[\small Almost everywhere (a.e.) uniform stability] \label{def_aeuniformstable} The equilibrium point is said to be a.e. uniform  stable w.r.t. measure $\mu_0\in {\cal M}(\bX)$ if for any given $\epsilon$, there exists a time $T(\epsilon)$ such that
\begin{eqnarray}
\int_{T(\epsilon)}^\infty \mu_0 (B_t)dt<\epsilon,\label{eq_aeunifrom}
\end{eqnarray}
where $B_t:=\{\bx \in \bX_1: \bs_t(\bx)\in B\}$ for every set $B\in{\cal B}(\bX_1)$.
\end{definition}


The following theorem is from \cite[Theorem 13]{rajaram2010stability} providing necessary and sufficient condition for a.e. uniform stability. 

\begin{theorem}\label{theorem_necc_suff}
The equilibrium point $\bx=0$ for system (\ref{sys}) satisfying Assumption \ref{assume_localstability} is a.e. uniformly stable w.r.t. measure $\mu_0$ if and only if there exists a  function $\rho(\bx)\in{\cal C}^1(\bX\setminus \{0\},\mR_{\geq 0})\cap {\cal L}_1(\bX_1)$  and satisfies
\begin{eqnarray}
\nabla\cdot ({\bf F}\rho )=h_0.\label{steady_pde1}
\end{eqnarray}
where $h_0\in {\cal L}_1(\bX,\mR_{\geq 0})$ is assumed to be the density function corresponding to the measure $\mu_0$. 
\end{theorem}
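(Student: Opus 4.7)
The plan is to prove both directions by constructing the candidate density
\[
\rho(\bx) \;=\; \int_0^\infty [\mP_t h_0](\bx)\, dt,
\]
and connecting its $L_1$-norm on $\bX_1$ to the integral $\int_0^\infty \mu_0(B_t)\,dt$ appearing in Definition~\ref{def_aeuniformstable}, while using the generator identity \eqref{PF_generator} to verify the PDE~\eqref{steady_pde1}. The linchpin is the duality pairing between $\mP_t$ and $\mU_t$, evaluated on the indicator function of a Borel set.

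For the necessity direction, I would start by observing that for any $B\in{\cal B}(\bX_1)$, the Koopman--Perron-Frobenius duality gives
\[
\int_B [\mP_t h_0](\bx)\,d\bx \;=\; \int_\bX h_0(\bx)\,\chi_B(\bs_t(\bx))\,d\bx \;=\; \mu_0(B_t),
\]
(assuming $\mathrm{supp}(h_0)\subseteq \bX_1$, with minor modifications otherwise). Taking $B=\bX_1$ and applying Fubini yields $\int_{\bX_1}\rho(\bx)\,d\bx = \int_0^\infty \mu_0(B_t)\,dt$, which is finite because a.e.\ uniform stability forces the tail of this integral to be arbitrarily small while the Markov property \eqref{Markov_prop} bounds the head. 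Hence $\rho\in{\cal L}_1(\bX_1)$, and $\rho\geq 0$ follows from the positivity property \eqref{positive_prop}. To verify \eqref{steady_pde1}, I would apply the P-F generator ${\cal P}_{\bF}$ and use the semigroup identity $\frac{d}{dt}\mP_t h_0 = {\cal P}_{\bF}\mP_t h_0$ to get
\[
{\cal P}_{\bF}\rho \;=\; \int_0^\infty \frac{d}{dt}\mP_t h_0\,dt \;=\; \lim_{t\to\infty} \mP_t h_0 \;-\; h_0 \;=\; -h_0,
\]
where the vanishing of the limit on $\bX\setminus\{0\}$ follows from the a.e.\ convergence of trajectories to the origin under Assumption~\ref{assume_localstability} together with a.e.\ uniform stability. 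Rewriting via \eqref{PF_generator} gives $\nabla\cdot(\bF\rho)=h_0$, and standard regularity for the flow together with $\bF\in{\cal C}^1$ yields $\rho\in{\cal C}^1(\bX\setminus\{0\})$.

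For the sufficiency direction, I would reverse the argument. Given $\rho$ satisfying \eqref{steady_pde1}, multiply the PDE by a compactly supported test function $\varphi\in{\cal C}^1(\bX_1)$, integrate by parts, and recognize ${\cal K}_{\bF}\varphi$ via \eqref{K_generator}. Choosing $\varphi = \mU_T\chi_B$ (after suitable mollification, since indicator functions are not $C^1$) and integrating in $T$ from $0$ to $\infty$ produces the pairing
\[
\int_0^\infty \mu_0(B_t)\,dt \;=\; \int_0^\infty \int_\bX h_0(\bx)[\mU_t\chi_B](\bx)\,d\bx\,dt \;\leq\; \int_{\bX_1}\rho(\bx)\,d\bx \;<\;\infty,
\]
from which \eqref{eq_aeunifrom} follows by choosing $T(\epsilon)$ large enough, since the tail of a finite integral is arbitrarily small.

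The hardest parts I expect are (i) justifying the exchange of limits and integrals in the improper integral defining $\rho$, which requires handling the possible singularity of $\rho$ at the origin --- this is precisely why ${\cal N}_\delta$ is excised and why $L_1$-integrability is claimed only on $\bX_1$; and (ii) approximating the non-smooth indicator $\chi_B$ in the sufficiency argument so that the Koopman generator calculus applies rigorously, which typically requires a mollification and dominated convergence argument leveraging the positivity property \eqref{positive_prop} and the Markov property \eqref{Markov_prop}. Regularity of $\rho$ away from the origin is a by-product of the smoothness of the flow $\bs_t$ inherited from $\bF\in{\cal C}^1$, and can be deferred to a remark citing \cite{rajaram2010stability}.
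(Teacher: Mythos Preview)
The paper does not actually prove this theorem: it is quoted verbatim as \cite[Theorem~13]{rajaram2010stability} and no argument is supplied. So there is nothing to compare your proposal against inside the present paper. That said, your construction $\rho(\bx)=\int_0^\infty[\mP_t h_0](\bx)\,dt$ is exactly the integral formula the authors invoke later in the proof of Theorem~\ref{theorem_verification} (equation~\eqref{integral}), and your duality computation $\int_B[\mP_t h_0]\,d\bx=\mu_0(B_t)$ is the standard route to connecting $\|\rho\|_{L_1(\bX_1)}$ with $\int_0^\infty\mu_0(B_t)\,dt$. Your sketch is therefore fully aligned with how the result is used here and with the proof strategy in the cited source; the technical caveats you flag (mollification of $\chi_B$, justification of the limit exchange, singularity at the origin) are precisely the points that \cite{rajaram2010stability} handles in detail, so deferring them to that reference is appropriate.
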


\subsection{Data-Driven Approximation: Naturally Structured Dynamic Mode Decomposition}\label{section_nsdmd}
Naturally structured dynamic mode decomposition (NSDMD) is a modification of Extended Dynamic Mode Decomposition (EDMD) algorithm \cite{williams2015data}, one of the popular algorithms for Koopman approximation from data. The modifications are introduced to incorporate the natural properties of these operators namely positivity and Markov. For the continuous-time dynamical system (\ref{sys}), consider snapshots of data set obtained as time-series data from single or multiple trajectories
\begin{eqnarray}
{\mathcal X}= [\bx_1,\bx_2,\ldots,\bx_M],\;\;\;\;{\cal Y} = [\mathbf{y}_1,\mathbf{y}_2,\ldots,\mathbf{y}_M] ,\label{data}
\end{eqnarray}
where $\bx_i\in \bX$ and $\mathbf{y}_i\in \bX$. The pair of data sets are assumed to be two consecutive snapshots i.e., $\mathbf{y}_i=\bs_{\Delta t}(\bx_i)$, where $\bs_{\Delta t}$ is solution of (\ref{sys}). Let ${\bPsi}=[\psi_1,\ldots,\psi_N]^\top$ be the choice of basis functions.
The popular Extended Dynamic Mode Decomposition (EDMD) algorithm provides the finite-dimensional approximation of the Koopman operator as the solution of the following least square problem. 

\begin{equation}\label{edmd_op}
\min\limits_{\bf K}\parallel {\bf G}{\bf K}-{\bf A}\parallel_F,
\end{equation}
where,
{\small
\begin{eqnarray}\label{edmd1}
{\bf G}=\frac{1}{M}\sum_{m=1}^M \bPsi({\bx}_m) \bPsi({\bx}_m)^\top,\\
{\bf A}=\frac{1}{M}\sum_{m=1}^M \bPsi({\bx}_m) \bPsi({\mathbf y}_m)^\top,
\end{eqnarray}}
with ${\bf K},{\bf G},{\bf A}\in\mathbb{R}^{N\times N}$, $\|\cdot\|_F$ stands for Frobenius norm. The above least square problem admits an analytical solution 
\begin{eqnarray}
{\bf K}_{EDMD}=\bf{G}^\dagger \bf{A}\label{edmd_formula}.
\end{eqnarray}
Convergence results for EDMD algorithms in the limit as the number of data points and basis functions go to infinity are provided in \cite{korda2018convergence,klus2020eigendecompositions}.
In this paper, we work with Gaussian Radial Basis Function (RBF) for the finite-dimensional approximation of the linear operators.
Under the assumption that the basis functions are positive, like the Gaussian RBF, the NSDMD algorithm propose following convex optimization problem for the approximation of the Koopman operator that preserves positivity and Markov property in Property~\ref{property}. 
\begin{eqnarray}\label{nsdmd}
&\min\limits_{\hat{\bf P}}\parallel \hat{\bf G}{\hat{\bf P}}-\hat{\bf A}\parallel_F\\\nonumber
\text{s.t.} \;\;
& [\hat{\bf P}]_{ij}\geq 0,\;\;\;\hat {\bf P}\mathds{1} = \mathds{1},
\end{eqnarray}
where,
\begin{eqnarray}\hat{\bf G}={\bf G}{\bf \Lambda}^{-1},\;\;\hat{\bf A}={\bf A}{\bf \Lambda}^{-1},\;\;\&\;\;{\bf \Lambda}=\int_\bX {\bPsi}{\bPsi}^\top d\bx\label{hatGA},
\end{eqnarray}
with $\bf G$ and $\bf A$ are as defined in \ref{edmd1} and $\mathds{1}$ is a vector of all ones. All the matrices in Eq. (\ref{hatGA}) are pre-computed from the data. In fact, since the basis functions are assumed to be Gaussian RBF, the constant ${\bf \Lambda}$ matrix can be computed explicitly as 
\[\Lambda_{i,j} =(\frac{\pi\sigma^2}{2})^{n/2} \exp^\frac{-\lVert \bc_i-\bc_j\rVert^2}{2\sigma^2}, i,j=1,2,\ldots,N,\]
where $\bc_i,\bc_j$ are the centers of the $\psi_i$ and $\psi_j$ Gaussian RBFs respectively. 
The constraints in (\ref{nsdmd}) ensure that finite-dimensional approximation preserves the positivity property and Markov property respectively. The approximation for the P-F operator and its generator are obtained as the solution of the optimization problem (\ref{nsdmd}) as 
\begin{eqnarray}
\mathbb{P}_{\Delta t}\approx \hat{\bf P}^\top=: {\bf P},\;\;\;\;\;{\cal P}_{\bF}\approx \frac{\hat{\bf P}^\top-{\bf I}}{\Delta t}=:\bM.\label{PF_approximation}
\end{eqnarray}

\section{Probabilistic Description of Unsafe Set}
In this paper, we consider a probabilistic description of the unsafe set defined as follows. Let $p(\bx)$ is the probability density function  describing the unsafe set and $\mu_p\in {\cal M}(\bX)$ is the associated probability measure i.e., $d\mu_p(\bx)=p(\bx) d\bx$.
Let $A\in {\cal B}(\bX)$, then the probability that the set $A$ is unsafe  is defined using $p(\bx)$ as 
\begin{align}
    {\rm Prob}(A \;{\rm is\; Unsafe})=\int_A p(\bx)d\bx=:\mu_p(A).
\end{align}
The above definition include the deterministic description of unsafe set as the special case.  In particular, if $\bX_u$ is an unsafe set then it can described using the following definition of the uniform probability density function
\begin{align}
    p(\bx)=\frac{1}{m(\bX_u)}\mathds{1}_{\bX_u}(\bx)
\end{align}
where $m(\cdot)$ is the Lebesgue measure and $\mathds{1}_{\bX_u}$ is the indicator function of the set $\bX_u$. 
So by allowing $p(\bx)$ to be nonuniform probability density function we can consider cases where different regions of the state space $\bX$ have varying degree of safety. 
Note that when $p(\bx)$ is supported on the entire set $\bX$, then the entire region of the state space $\bX$ is potentially hazardous; however, the degree of hazard is determined by the probability density function $p(\bx)$. In particular, if 
\[\mu_p(A_1)<\mu_p(A_2)\]
where $A_i\in {\cal B}(\bX)$, then the region $A_2$ is more hazardous than region $A_1$. Strictly speaking, in the above described probabilistic setting, it may not be appropriate to use the terminology of a safe or unsafe set as it has a connotation of carrying binary information. Consider the case when $p(\bx)$ is supported on the entire space $\bX$, then potentially the entire space $\bX$ is hazardous.  However, with nonuniform probability density $p(\bx)$, some regions of $\bX$ are more favorable to navigate and hence potentially less hazardous than others. The objective then is to navigate through regions of $\bX$ where $\mu_p$ is small. With some abuse of terminology, we will continue to refer to the sets in $\bX$ as safe and unsafe where the degree of safety is characterized by $p(\bx)$.  We defined the probability of collision as follows.
\begin{definition} The probability of collision with the unsafe set under the system dynamics (\ref{sys}) with the initial condition distributed w.r.t. probability measure $\mu_0$ is given by
\begin{align}
\int_{0}^\infty \int_{\bX} p(\bs_t(\bx))d\mu_0(\bx)dt\label{def_collision}
\end{align}
\end{definition}
\vspace{0.1in}
The objective is to minimize the above collision probability while navigating the system dynamics from some initial set $\bX_0$, supported on measure $\mu_0$ to some final terminal set $\bX_r$ asymptotically.

The reason for using the formula (\ref{def_collision}) in the definition of collision probability stems from the following Lemma.  The following Lemma essentially points to the fact that the formula (\ref{def_collision}) is a measure of occupancy in the unsafe region, and this connection is evident in the case of a deterministic unsafe set. 
\vspace{0.1in}
\begin{lemma}
Let $ p(\bx)=\frac{1}{m(\bX_u)}\mathds{1}_{\bX_u}(\bx)$. If 
\begin{align}
\int_{0}^\infty \int_{\bX} p(\bs_t(\bx))d\mu_0(\bx)dt=0
\end{align} then 
\begin{align}
    \int_{\bX_1} \mathds{1}_{\bX_u}(\bs_t(\bx))h_0(\bx)d\bx=0,\;\;\;\forall t\geq 0.\label{contra}
\end{align}
i.e., the amount of time system trajectories spend in the unsafe set $\bX_u$ starting from the positive measure set of initial condition corresponding to the initial set, $\bX_0$, with density $h_0(\bx)$ is equal to zero. 
\label{lemma:time_unsafeset}

\begin{proof}
Proof by contradiction. 
Assume (\ref{contra}) is not true, i.e., there exists some time $t_0$ for which 
\[\int_{\bX_1} \mathds{1}_{\bX_u}(\bs_{t_0}(\bx))h_0d\bx=\int_{\bX_1}[ \mU_{t_0}\mathds{1}_{\bX_u}](\bx)h_0d\bx>0\]
Then using the continuity property of the Koopman semi-group, we know there exists a $\Delta$ such that 
\[\int_{t_0}^{t_0+\Delta}\int_{\bX_1}[ \mU_{t_0}\mathds{1}_{\bX_u}]((\bx))h_0(\bx)d\bx dt>0.\]
We have 
\begin{equation}
\begin{aligned}
    0<\int_{t_0}^{t_0+\Delta}\int_{\bX_1}[ \mU_{t_0}\mathds{1}_{\bX_u}](\bx)h_0(\bx)d\bx dt\leq\\ \int_{0}^{\infty}\int_{\bX_1}[ \mU_{t_0}\mathds{1}_{\bX_u}](\bx)h_0(\bx)d\bx dt\nonumber\\=\int_{0}^\infty \int_{\bX} p(\bs_t(\bx))d\mu_0(\bx)dt=0\nonumber.
\end{aligned}
\end{equation}
\end{proof}
\end{lemma}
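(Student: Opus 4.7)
The plan is a proof by contradiction. Suppose the stated conclusion fails, so there exists some $t_0 \geq 0$ with
\begin{eqnarray*}
\int_{\bX_1}\mathds{1}_{\bX_u}(\bs_{t_0}(\bx))h_0(\bx)\,d\bx > 0.
\end{eqnarray*}
Using the definition of the Koopman operator this rewrites as $\int_{\bX_1}[\mU_{t_0}\mathds{1}_{\bX_u}](\bx)h_0(\bx)\,d\bx$, which positions the argument to exploit time-continuity of the semigroup.

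The next step is to argue that $F(t) := \int_{\bX_1}[\mU_t \mathds{1}_{\bX_u}](\bx)h_0(\bx)\,d\bx$ is continuous at $t_0$. Under the $\mathcal{C}^1$ regularity of $\bF$ assumed in \eqref{sys}, the flow map $t \mapsto \bs_t(\bx)$ is continuous for each fixed $\bx$, so $t\mapsto \mathds{1}_{\bX_u}(\bs_t(\bx))$ is continuous at $t_0$ for every $\bx$ whose orbit does not sit on $\partial \bX_u$ at that instant. Combined with the uniform bound $|[\mU_t\mathds{1}_{\bX_u}](\bx)| \leq 1$ and $h_0 \in \mathcal{L}_1(\bX)$, dominated convergence yields continuity of $F$ at $t_0$. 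Since $F(t_0) > 0$, there is a $\Delta > 0$ on which $F(t) > 0$ throughout $[t_0, t_0+\Delta]$, hence $\int_{t_0}^{t_0+\Delta} F(t)\,dt > 0$.

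To close the contradiction, enlarge the time interval from $[t_0, t_0+\Delta]$ to $[0,\infty)$ and the spatial domain from $\bX_1$ to $\bX$; both operations can only increase a nonnegative integrand. Using $\mathds{1}_{\bX_u}(\bx) = m(\bX_u)\,p(\bx)$ and $d\mu_0(\bx) = h_0(\bx)\,d\bx$, the resulting strict inequality rearranges into
\begin{eqnarray*}
0 \;<\; m(\bX_u)\int_0^\infty \!\!\int_\bX p(\bs_t(\bx))\,d\mu_0(\bx)\,dt,
\end{eqnarray*}
which directly contradicts the hypothesis that the right-hand integral vanishes.

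The main obstacle is the continuity step for $F$: pointwise continuity of $t \mapsto \mathds{1}_{\bX_u}(\bs_t(\bx))$ can fail precisely on trajectories that touch $\partial \bX_u$ at $t_0$. This set of ``bad'' initial conditions is typically negligible (e.g.\ if $\bX_u$ has Lebesgue-null boundary and the flow has absolutely continuous pushforward), and a fully rigorous version would either invoke strong continuity of the Koopman semigroup on an appropriate function space or use duality with the P-F picture to obtain $\mathcal{L}_1$-continuity directly. Everything else reduces to Fubini and monotonicity of the integral.
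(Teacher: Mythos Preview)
Your proof is correct and follows essentially the same contradiction strategy as the paper: assume positivity at some $t_0$, invoke continuity of the Koopman semigroup to obtain positivity on a time interval $[t_0,t_0+\Delta]$, then enlarge to $[0,\infty)$ and contradict the hypothesis. You are in fact more careful than the paper in justifying the continuity step via dominated convergence and in tracking the $m(\bX_u)$ scaling factor.
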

So by minimizing the quantity in (\ref{def_collision}) for a general non-uniform probability density function $p(\bx)$, we are essentially trying to minimize the occupancy in the unsafe set.

\section{Main Results}
The main results of this paper are  presented in the following two subsections and  provide for a  convex formulation to the probabilistic safety verification and control synthesis problems. 
\subsection{Probabilistic Safety Verification}
In this section, we show that the probabilistic safety navigation problem can be  verified convexly. In particular, the stochastic safety navigation problem can be written as convex optimization problem. The stochastic safety navigation problem can be stated as follows.
\begin{definition}[Stochastic Safety Navigation Problem]\label{def_verification} The problem consists of  navigating almost every (a.e.) (w.r.t. Lebesgue measure) trajectories of the system 
\begin{align}
    \dot\bx={\bf F}(\bx)\label{sys_dyn}
\end{align}
starting from some initial set, $\bX_0$, to some final set $\bX_r$ asymptotically, while ensuring that the probability of collision with the obstacle set should be less than or equal to $\gamma$ i.e., 
\begin{align}
\int_{0}^\infty \int_{\bX_1} p(\bs_t(\bx))d\mu_0(\bx)dt\leq \gamma.\label{prob_gamma}
\end{align}
\end{definition}
The main results of this section provides convex formulation to the stochastic safety verification problem as defined in Definition \ref{def_verification}. Before that we make following assumption on the system dynamics (\ref{sys_dyn}).
\begin{assumption}\label{assume_local}
We assume that the final destination set is locally stable with local domain of attraction $N_\delta$ for a fixed $\delta>0$.  
\end{assumption}
\begin{theorem}\label{theorem_verification}
Under Assumption \ref{assume_local}, the stochastic safety navigation problem (Definition \ref{def_verification}) can be verified convexly if there exists a density function $\rho(\bx)\in{\cal L}_1(\bX_1)\cap {\cal C}^1(\bX_1,\mR_{\geq 0})$ and  satisfies 
\begin{align}
&\nabla \cdot ({\bf F}(\bx) \rho(\bx))=h_0(\bx),\;\;{\rm a.e.}\;\bx\in \bX_1\label{feas11}\\
&\int_{\bX_1} p(\bx)\rho(\bx)d\bx\leq \gamma\label{feas1}
\end{align}
\end{theorem}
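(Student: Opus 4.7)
My plan is to verify the two desiderata of Definition~\ref{def_verification} separately, each from one of the two hypotheses, and to note that the set of feasible $\rho$ is manifestly convex (both constraints are linear in $\rho$, and the nonnegativity cone is convex).

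For the asymptotic navigation part, the PDE~(\ref{feas11}) is precisely the steady-state transport equation that appears in Theorem~\ref{theorem_necc_suff}. So the first step is just to invoke Theorem~\ref{theorem_necc_suff}: the existence of $\rho \in \mathcal{C}^1(\bX_1, \mR_{\geq 0}) \cap \mathcal{L}_1(\bX_1)$ satisfying $\nabla\cdot(\bF\rho)=h_0$, together with Assumption~\ref{assume_local} (which is Assumption~\ref{assume_localstability}), immediately yields a.e.\ uniform stability of the equilibrium with respect to $\mu_0$, which in particular says that a.e.\ trajectory starting in $\bX_0$ enters and remains in $\mathcal{N}_\delta \supset \bX_r$ asymptotically.

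For the probabilistic safety bound, the key observation I will use is the interpretation of $\rho$ as an occupation density. Formally, if I set
\begin{equation*}
\tilde\rho(\bx) := \int_0^\infty [\mathbb{P}_t h_0](\bx)\, dt,
\end{equation*}
then by the infinitesimal-generator formula~(\ref{PF_generator}) and the fundamental theorem of calculus,
\begin{equation*}
\nabla\cdot(\bF \tilde\rho) = -\int_0^\infty \frac{d}{dt}[\mathbb{P}_t h_0]\, dt = h_0 - \lim_{t\to\infty}[\mathbb{P}_t h_0] = h_0 \quad \text{on } \bX_1,
\end{equation*}
since mass escapes $\bX_1$ into the local attractor neighborhood $\mathcal{N}_\delta$ under the a.e.\ uniform stability already established. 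Thus $\tilde\rho$ is the natural solution of~(\ref{feas11}), and I will use it (or identify $\rho$ with it on $\bX_1$) in the subsequent step.

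With this representation, I apply Fubini together with the Koopman--Perron--Frobenius duality to compute
\begin{equation*}
\int_{\bX_1} p(\bx)\rho(\bx)\, d\bx
= \int_0^\infty \!\!\int_{\bX_1} p(\bx)\,[\mathbb{P}_t h_0](\bx)\, d\bx\, dt
= \int_0^\infty \!\!\int_{\bX_1} [\mathbb{U}_t p](\bx)\,h_0(\bx)\, d\bx\, dt,
\end{equation*}
and then use the definition~(\ref{koopman_operator}) of $\mathbb{U}_t$ to rewrite the right-hand side as $\int_0^\infty\!\int_{\bX_1} p(\bs_t(\bx))\, d\mu_0(\bx)\, dt$. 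Combining with~(\ref{feas1}) yields the collision probability bound~(\ref{prob_gamma}).

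The main obstacle is the second step: justifying the identification of $\rho$ with $\tilde\rho$ on $\bX_1$ and, relatedly, controlling the vanishing of $[\mathbb{P}_t h_0]$ outside $\mathcal{N}_\delta$ as $t\to\infty$ well enough to apply Fubini and to legitimize the improper time integral. The nonnegativity of $p$, $\rho$, $h_0$, and of the operators (Property~\ref{property}) makes Tonelli applicable so interchange of integrals is free, but pinning down that any $\rho$ solving~(\ref{feas11}) agrees with the occupation-density $\tilde\rho$ in an integral sense against $p$ requires the a.e.\ uniform stability established in the first step to rule out ``residual'' mass accumulating in $\bX_1$. Everything else (convexity, positivity, finiteness via the $\mathcal{L}_1(\bX_1)$ hypothesis on $\rho$) follows routinely.
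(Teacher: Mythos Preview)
Your proposal is correct and follows essentially the same route as the paper: invoke Theorem~\ref{theorem_necc_suff} for the a.e.\ convergence part, represent $\rho$ via the occupation integral $\int_0^\infty[\mathbb{P}_t h_0]\,dt$, and then use Koopman--Perron--Frobenius duality together with Fubini/Tonelli to identify $\int_{\bX_1} p\rho\,d\bx$ with the collision integral~(\ref{prob_gamma}). The only cosmetic difference is that the paper runs the chain of equalities starting from the collision integral and ending at $\int_{\bX_1} p\rho\,d\bx$, whereas you run it in the opposite direction; and the paper simply \emph{asserts} the integral formula~(\ref{integral}) for $\rho$ without addressing the uniqueness issue you (rightly) flag as the main obstacle.
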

\begin{proof}
Following the results of Theorem \ref{theorem_necc_suff}, since $\rho(\bx)$ satisfies (\ref{feas11}) it follows that the $\bX_r=\{0\}$ is a.e. uniform stable w.r.t. measure $d \mu_0(\bx)=h_0(\bx)d\bx$ i.e., a.e. initial condition from set $\bX_0$ supported on measure $\mu_0$ will be attracted to the final set $\bX_r$. Furthermore, the density function $\rho(\bx)$ that satisfies (\ref{feas11}) can be expressed using the following integral formula
\begin{align}
    \rho(\bx)=\int_0^\infty [\mP_t h_0](\bx)dt \label{integral}
\end{align}
We next show that (\ref{feas1}) implies (\ref{prob_gamma}) i.e., the probability of collision is less than or equal to $\gamma$. We have,
{\small
\begin{align}
\int_0^\infty \int_{\bX_1} p(\bs_t(\bx))d\mu_0(\bx)dt= 
\int_0^\infty \int_{\bX_1} [\mU_t p](\bx)d\mu_0(\bx) dt \nonumber
\end{align}}
Now using duality between the Koopman and P-F operator and the fact that $\frac{d\mu_0}{d\bx}=h_0$, we obtain
{\small
\begin{align}
\int_0^\infty \int_{\bX_1} [\mU_t p](\bx)d\mu_0(\bx) dt= 
\int_0^\infty\int_{\bX_1}p(\bx)[\mP_t h_0](\bx)d\bx dt \nonumber
\end{align}}
Exchanging the spatial and temporal integration and using the integral formula for the $\rho(\bx)$ in (\ref{integral}) we obtain
\[\int_0^\infty\int_{\bX_1}p(\bx)[\mP_t h_0](\bx)d\bx dt=\int_{\bX_1}p(\bx)\rho(\bx)d\bx\]

\end{proof}

\subsection{Convex Approach to Control Synthesis with Stochastic Safety Constraints}
In this section, we formulate control synthesis  for stochastic safety navigation problem as a convex optimization problem. The convex control synthesis is possible under the assumption that control system is affine in input.  Consider a control affine dynamical system of the form
\begin{align}
    \dot \bx=\bff(\bx)+\bg(\bx)u\label{sys_control}
\end{align}
where $\bu\in \mathbb{R}$ is the control input. For the simplicity of presentation we restrict the discussion to the case of single input. It is straight forward to extend this framework to multi-input case. The objective is to design the control input so that almost all trajectories of the system (\ref{sys_control}) starting from the initial set $\bX_0$ to the final set $\bX_r$ minimizes the probability of collision with the obstacle set to less than equal to $\gamma$ (Eq. (\ref{prob_gamma}))
Following assumption is made on the control synthesis problem for safety navigation.
\begin{assumption}\label{assume1}
We assume that the control for stochastic safety navigation is feedback in nature i.e., $u=k(\bx)\in {\cal C}^1(\bX)$ and that the final destination set $\bX_r$ is locally stable with local domain of attraction ${\cal N}_\delta$ for a fixed $\delta>0$.  
\end{assumption}
Following is the main results of this section. 
\begin{theorem}\label{theorem2}Under Assumption \ref{assume1}, the control synthesis problem for the stochastic safety navigation with norm constraints on the control input of the form, $|u|\leq M$, can be formulated as following convex feasibility problem in terms of variable $\rho \in{\cal L}_1(\bX_1)\cap {\cal C}^1(\bX_1,\mR_{\geq 0})$ and $\bar \rho\in{\cal C}^1(\bX_1,\mR) $.
\begin{eqnarray}
\nabla\cdot(\bff \rho+\bg \bar \rho)=h_0,\;\;{\rm a.e.}\; \bx\in \bX_1\label{feas21}\\
\int_{\bX_1}p(\bx)\rho(\bx)d\bx\leq \gamma\label{feas22}\\
|\bar\rho(\bx)|\leq M \rho(\bx)  \label{feas2}
\end{eqnarray}
The control input is given by 
\begin{align}
    u=k(\bx)=\frac{\bar \rho(\bx)}{\rho(\bx)}\label{feedback_control}
\end{align}

\end{theorem}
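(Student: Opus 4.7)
\textbf{Proof plan for Theorem \ref{theorem2}.} The idea is to reduce the control-synthesis statement to the verification result Theorem \ref{theorem_verification} via the change of variable $\bar\rho(\bx):=k(\bx)\rho(\bx)$, which lifts the bilinear pair $(k,\rho)$ to a jointly linear pair $(\rho,\bar\rho)$. Under the feedback $u=k(\bx)=\bar\rho(\bx)/\rho(\bx)$, the closed-loop vector field is $\bF(\bx)=\bff(\bx)+\bg(\bx)k(\bx)$, and the steady-state Liouville equation $\nabla\cdot(\bF\rho)=h_0$ of Theorem \ref{theorem_necc_suff} rewrites cleanly as $\nabla\cdot(\bff\rho+\bg\bar\rho)=h_0$, which is exactly \eqref{feas21} and is linear in $(\rho,\bar\rho)$.

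With this substitution in place, I would proceed in three steps. First, I would verify that the feasibility set \eqref{feas21}--\eqref{feas2} is convex: the divergence constraint and the collision integral \eqref{feas22} are linear in $(\rho,\bar\rho)$, and the pointwise constraint $|\bar\rho(\bx)|\leq M\rho(\bx)$ is equivalent to the two linear inequalities $\bar\rho-M\rho\leq 0$ and $-\bar\rho-M\rho\leq 0$, so it defines a convex cone. Second, I would apply Theorem \ref{theorem_verification} to the closed-loop system with $\bF=\bff+\bg k$: since $\rho\in {\cal C}^1(\bX_1,\mR_{\geq 0})\cap {\cal L}_1(\bX_1)$ satisfies \eqref{feas21}, a.e.\ initial conditions drawn from $\mu_0$ are asymptotically attracted to $\bX_r$, and the integral identity derived in the proof of Theorem \ref{theorem_verification} using Koopman--P-F duality gives
\begin{align*}
\int_0^\infty\!\!\int_{\bX_1}\! p(\bs_t(\bx))d\mu_0(\bx)dt=\int_{\bX_1}p(\bx)\rho(\bx)d\bx\leq\gamma,
\end{align*}
so the probabilistic collision bound \eqref{prob_gamma} holds. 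Third, \eqref{feas2} is precisely the pointwise bound $|k(\bx)|\leq M$ on the synthesized feedback \eqref{feedback_control}.

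The main obstacle is the well-posedness of the feedback \eqref{feedback_control} at points where $\rho$ vanishes. By the integral formula $\rho(\bx)=\int_0^\infty[\mP_t h_0](\bx)dt$ from the proof of Theorem \ref{theorem_verification}, the density $\rho$ represents the time-averaged occupancy of the closed-loop system and is strictly positive on the portion of $\bX_1$ actually visited by trajectories originating from $\mathrm{supp}(\mu_0)$. On the complement, constraint \eqref{feas2} forces $\bar\rho=0$ as well, so $k$ can be extended arbitrarily (e.g.\ by zero) without affecting a.e.\ trajectories with respect to $\mu_0$. The only delicate point is ensuring $k\in {\cal C}^1$ as required by Assumption \ref{assume1}; this is immediate on the interior of $\{\rho>0\}$ from $\rho,\bar\rho\in {\cal C}^1$, while at the boundary a standard mollification argument suffices. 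This is the cleanest route: the heavy lifting has already been absorbed into Theorem \ref{theorem_verification}, and Theorem \ref{theorem2} is essentially that theorem rewritten for the closed-loop field under the linearizing change of variable, together with a convexity verification of the resulting constraints.
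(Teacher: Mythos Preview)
Your proposal is correct and follows essentially the same route as the paper: apply Theorem~\ref{theorem_verification} to the closed-loop system $\bF=\bff+\bg k$, then linearize the bilinear pair $(k,\rho)$ via the substitution $\bar\rho=k\rho$ to obtain \eqref{feas21}, with \eqref{feas22} handled exactly as in Theorem~\ref{theorem_verification} and \eqref{feas2} encoding both well-definedness of \eqref{feedback_control} and the bound $|u|\leq M$. Your treatment is in fact more thorough than the paper's, which does not explicitly verify convexity of the constraint set or discuss the $\{\rho=0\}$ degeneracy and the ${\cal C}^1$ regularity of $k$.
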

\begin{proof}
The proof of this Theorem follows along similar lines to the proof of Theorem \ref{theorem_verification} applied to feedback control system $\dot \bx={\bf F}(\bx):=\bff (\bx)+\bg(\bx)k(\bx)$. This gives us 
\[\nabla\cdot((\bff+\bg k)\rho)=h_0\]
The above equation is bilinear in the two unknowns $\rho$ and $k$ and can be converted to linear equation in terms of new unknown variables  and $\rho$ and $\bar \rho$ defined as  $\bar \rho(\bx):=k(\bx)\rho(\bx)$. After the change of variables from $(\rho,k)\to (\rho,\bar \rho)$, we obtain (\ref{feas21}). The derivation of  (\ref{feas22}) follows along the same lines of corresponding derivation in Theorem \ref{theorem_verification}. The condition (\ref{feas2}) ensures that the formula for the control input in (\ref{feedback_control}) is well defined and that the constraints on the control input i.e., $|u|\leq M$ are satisfied. 
\end{proof}

\section{Computational Framework}
For the finite dimensional approximation of the infinite dimensional  convex feasibility problems (\ref{feas1}) and (\ref{feas2}), we need to construct the approximation of the generator corresponding to vector field $\bf f$ and $\bg$ i.e., $\nabla\cdot ({\bf f}\rho)$ and $\nabla\cdot(\bg\bar \rho)$. Following notations from (\ref{PF_generator}) and (\ref{PF_approximation}), let the approximations of the two generators corresponding to vector fields $\bff$ and $\bg$ be denoted by
\begin{align}
 {\cal P}_{\bff}\approx \bM_0,\;\;\;{\cal P}_{\bg}\approx \bM_1
\end{align}
We use NSDMD algorithm outlined in Section \ref{section_nsdmd} for this approximation with 
$\bPsi(\bx)=(\psi_1,\ldots, \psi_N)^\top$ as the basis functions. Let $h_0(\bx)$, $p(\bx)$, $ \rho(\bx)$, and $\bar \rho(\bx)$ be expressed in terms of the basis function 
\begin{align}
&h_0(\bx)=\bPsi^\top {\bf m},\;\rho(\bx)\approx \bPsi^\top{\bf v},\;\bar\rho(\bx)\approx \bPsi^\top {\bf w}\nonumber\\
&p(\bx)\approx \bPsi^\top \bu .\label{approx}
\end{align}
Using (\ref{approx}), we can also write
\begin{align}
    \int_{\bX_1}p(\bx)\rho(\bx)d\bx\approx \bu^\top \left[\int_{\bX_1}\bPsi(\bx)\bPsi^\top(\bx)d\bx \right]\bv\\=\bu^\top {\bf D} \bv
\end{align}
where ${\bf D}:=\int_{\bX_1}\bPsi(\bx)\bPsi^\top(\bx)d\bx$.
\begin{assumption}\label{assume_gaussian}
We assume that all the basis functions are positive and are linearly independent. 
\end{assumption}
\begin{remark}
\label{remark_positivebasis}
In this paper, we use Gaussian RBF to obtain all the simulation results i.e.,
$
\psi_k(\bx)=\exp^{-\frac{\parallel \bx-{\bf c}_k\parallel^2}{2\sigma^2}}$
where $\bc_k$ is the center of the $k^{th}$ Gaussian RBF. 
\end{remark}
Using Assumption \ref{assume_gaussian}, the finite dimensional approximation of the convex synthesis problem from Theorem \ref{theorem2} can be written as 
\begin{align}
    -\bM_0\bv-\bM_1\bw=\bm\label{ll1}\\
    \bu^\top \bD \bv\leq \gamma\\
    \bv\geq 0,\;\;\;|\bw|\leq M \bv\label{ll3}
\end{align}
where the inequalities are assumed to the element wise. The feedback control input is then obtained using the following formula
\begin{align}
    u=k(\bx)=\frac{\bPsi^\top(\bx) \bw}{\bPsi^\top(\bx) \bv}
\end{align}

\section{Simulation Results}
All the simulation results in this paper are obtained using Gaussian RBF. The value of $\gamma$ is taken to be $0.1$ for all the examples. The following rules of thumb are abided in selecting centers and $\sigma$ parameters for the Gaussian RBF. The RBF centers are chosen to be uniformly distributed in the state space at a distance of $d$. The $\sigma$ for the Gaussian RBF is chosen such that $d\leq 3\sigma\leq 1.5 d$. All the simulation results are performed using MATLAB on a desktop computer with 16GB RAM.  The optimization problem is solved using CVX. We present simulation results involving two examples and in both the examples we design local controller around a small neighbourhood of $\bX_r$ to stabilize the equilibrium point. Let  us consider this small neighbourhood to be $\mathbf{X}_L $ which is defined as follows:
\begin{align}
    \mathbf{X}_L = \{\bx \in \mathbf{X} : |\bx - \mathbf{X}_r | \le \epsilon , \epsilon >0 \}\nonumber
\end{align}
Here, the value of $\epsilon = 0.002$. We   design   an   optimal   linear   quadratic   regulator   (LQR) controller for $\mathbf{X}_L$ from the linearized dynamics of the model to guarantee the convergence in $\mathbf{X}_L$.
\\
\noindent{\textbf{Example 1.}} We consider the problem of navigating a boat on a river \cite{rantzer2004analysis}. The dynamics of the  model is given by:
\begin{equation*}
    \dot{x_1} = 1+0.125\cos(0.5x_1)-0.125\sin(0.5x_2); \;\; \dot{x_2} = u;
\end{equation*}
We define the sets to be:
\begin{itemize}
\small
    \item $\mathbf{X} \triangleq \{\bx \in \mathds{R}^2 : 0 \le x_1 \le 10,0 \le x_2 \le 10 \}$
    \item $\mathbf{X}_0 \triangleq \{\bx \in \mathbf{X} : 0 \le x_1 \le 1,5 \le x_2 \le 7 \}$
    \item $\mathbf{X}_u \triangleq \{\bx \in \mathbf{X} : (2\sin(x_1)+6-x_2)(x_2-2\sin(x_1)-4)\le 0 \;\;or\;\; x_1 \le 0\}$
    \item $\mathbf{X}_r \triangleq [9.7,3.8]^T$.
\end{itemize}
We have used $2500$ radial basis functions as lifting functions and $\Delta t = 0.01$. 
Fig.~\ref{exp1_tract} displays the open and closed loop trajectories navigating the river dynamics. We can see from this plot that the controller obtained using the finite-dimensional approximation of the feasibility problem (\ref{ll1})-(\ref{ll3}) is able to avoid unsafe set successfully.   Fig.~\ref{exp1_rho} displays the color plots of the density function $\rho(\bx)$ which clearly displays the differences in the density values between safe and unsafe sets. In particular, the value of the density function on the unsafe set is equal to zero.\\ 
\begin{figure}[ht]
\centering
\includegraphics[width=0.9\linewidth]{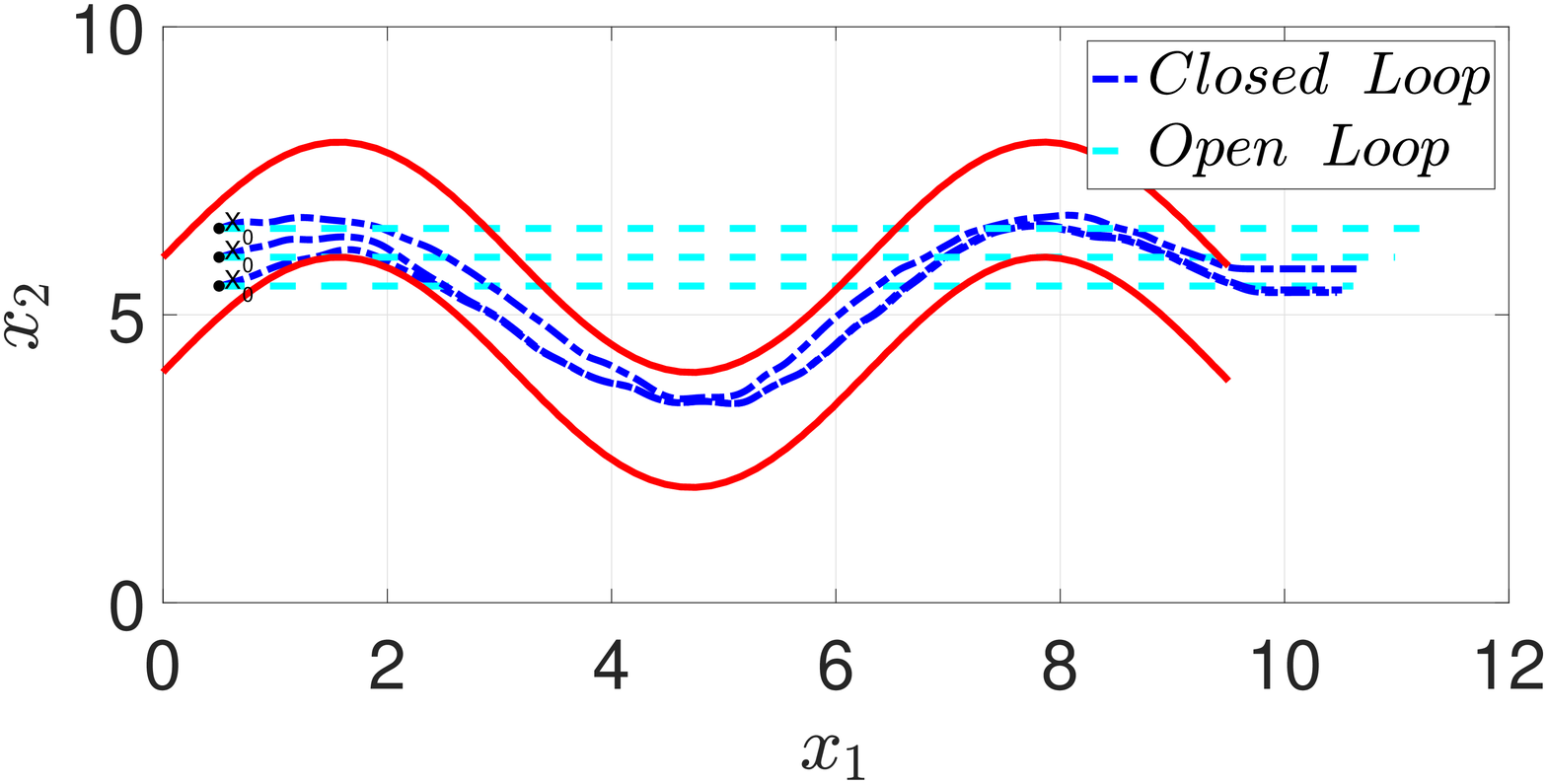}
\caption{Open and Closed loop trajectories navigating the river model prescribed in Example 1. }
\label{exp1_tract}
\end{figure}
\begin{figure}[ht]
\centering
\includegraphics[width=0.9\linewidth]{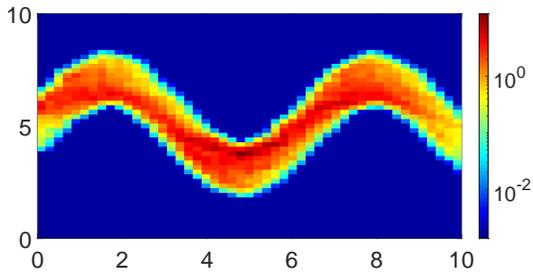}
\caption{Density function obtained from convex optimization in Example 1. }
\label{exp1_rho}
\end{figure}
\noindent{\textbf{Example 2.}} Consider the following controlled dynamics of nonlinear system:
\begin{align}
       \dot{x_1} &= -0.125 + 0.125\cos(0.5x_1) - 0.125\sin(0.5x_2) \nonumber \\
    \dot{x_2} &= u \nonumber 
\end{align}

For this example, we choose
\begin{itemize}
\small
    \item $\mathbf{X} \triangleq \{\bx \in \mathds{R}^2 : -8 \le x_1 \le 8,-8 \le x_2 \le 8 \}$
    \item $\mathbf{X}_0 \triangleq \{\bx \in \mathbf{X} : 5.3 \le x_1 \le 6.7,-0.7 \le x_2 \le 0.7 \}$
    \item $\mathbf{X}_{u1} \triangleq \{\bx \in \mathbf{X} : 2 \le x_1 \le 4,-3 \le x_2 \le 0 \}$
    \item $\mathbf{X}_{u2} \triangleq \{\bx \in \mathbf{X} : 2 \le x_1 \le 4,0 \le x_2 \le 3 \}$
    \item $\mathbf{X}_{u3} \triangleq \{\bx \in \mathbf{X} : -1 \le x_1 \le 7,-3 \le x_2 \le 3 \}$
    \item $\mathbf{X}_r \triangleq [0,0]^T$
\end{itemize}
$\mathbf{X}_{u3}$ represents the outer deterministic obstacle set where the value of $p(\bx) = 1$ everywhere.  It confines our feasible region to a rectangular block. $\mathbf{X}_{u1}$ and $\mathbf{X}_{u2}$ represents inner obstacle sets with probability measure $\mu_p(\mathbf{X}_{u1})$ and $\mu_p(\mathbf{X}_{u2})$ respectively. They divides our rectangular feasible set into two smaller regions as shown in Fig.~\ref{exp2_scheme}.  We design the problem such that trajectories starting from $\mathbf{X}_0$ finds it's way to $\mathbf{X}_r$ by passing through either $\mathbf{X}_{u1}$ or $\mathbf{X}_{u2}$ depending upon their probability measure. 
$\Delta t = 0.01$. The probability measure  of $\mathbf{X}_{u1}$ and $\mathbf{X}_{u2}$ are varied to show their effect on the trajectories as can be seen in Fig.~\ref{exp2_tract_0505}-\ref{exp2_tract_100}. The values of density function $\rho(\bx)$ obtained from the convex optimization problem varies with respect to the probability measure of unsafe sets which can be observed in Fig.~\ref{exp2_rho_0505}-\ref{exp2_rho_100}. It is observed that values of $\rho(\bx)$ in the obstacle sets $\mathbf{X}_{u1}$ and $\mathbf{X}_{u2}$ increases when there is a decrease in the probability density function of the obstacle set and vice-versa. This is to be expected from the fact that $\rho(\bx)$ is an occupancy measure and obstacle set with high probability density function will have fewer trajectories entering into them. 
\begin{figure}[ht]
\centering
\includegraphics[width=0.9\linewidth]{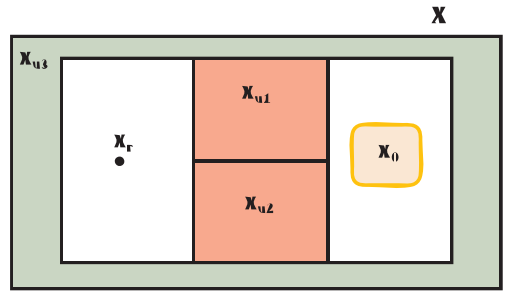}
\caption{Schematic diagram of unsafe sets in Example 2. } 
\label{exp2_scheme}
\end{figure}
\begin{figure}[ht]
\centering
\includegraphics[width=0.9\linewidth]{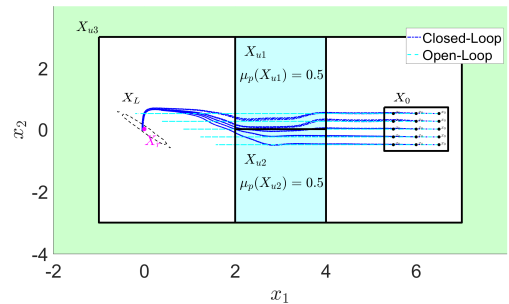}
\caption{State trajectories of Example 2 avoiding unsafe sets $\mathbf{X}_{u1}$ and $\mathbf{X}_{u2}$ both with probability measure of 0.5. } 
\label{exp2_tract_0505}
\end{figure}
\begin{figure}[ht]
\centering
\includegraphics[width=0.9\linewidth]{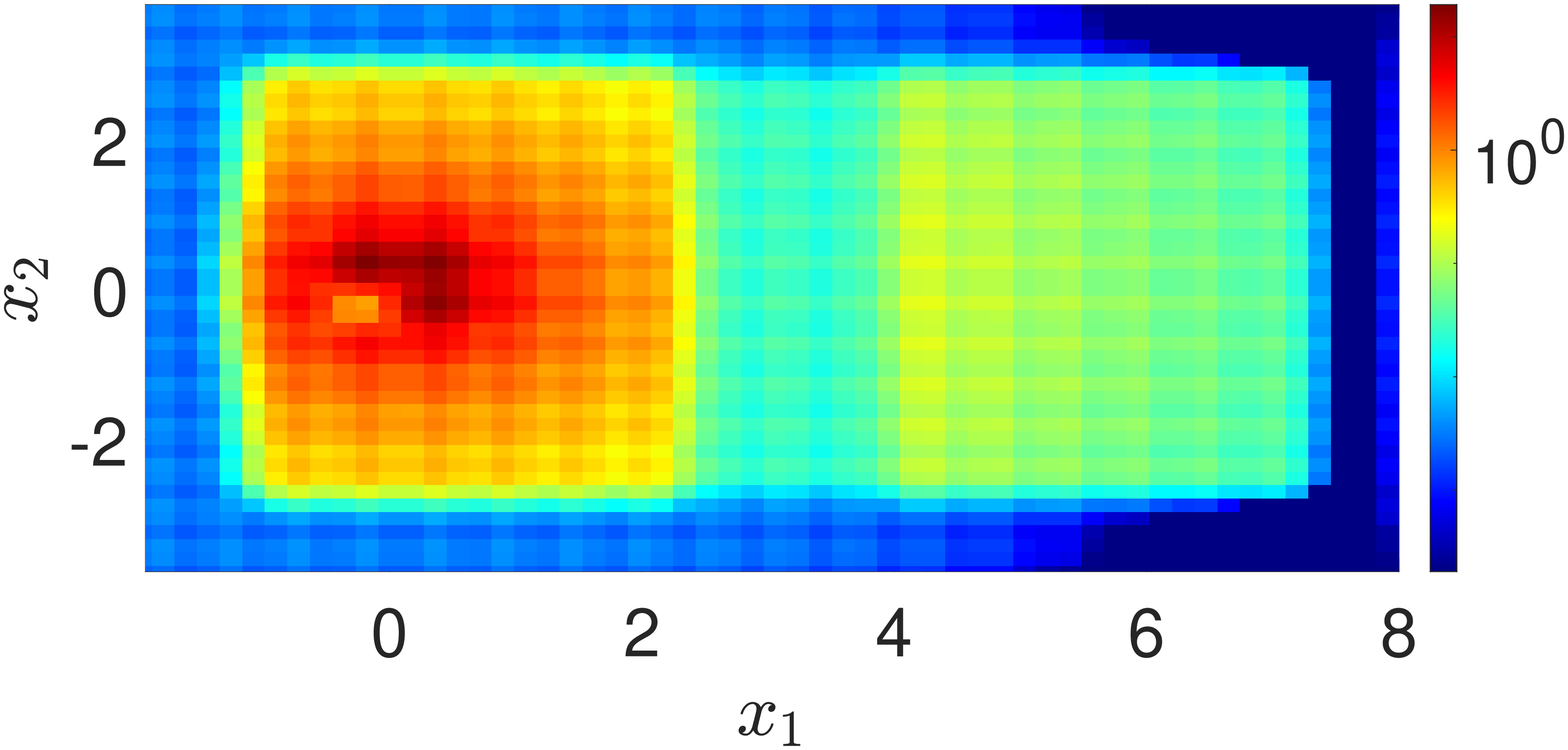}
\caption{$\rho(\bx)$ in Example 2 with respect to probability measure of 0.5 in the unsafe sets $\mathbf{X}_{u1}$ and $\mathbf{X}_{u2}$. } 
\label{exp2_rho_0505}
\end{figure}
\begin{figure}[ht]
\centering
\includegraphics[width=0.9\linewidth]{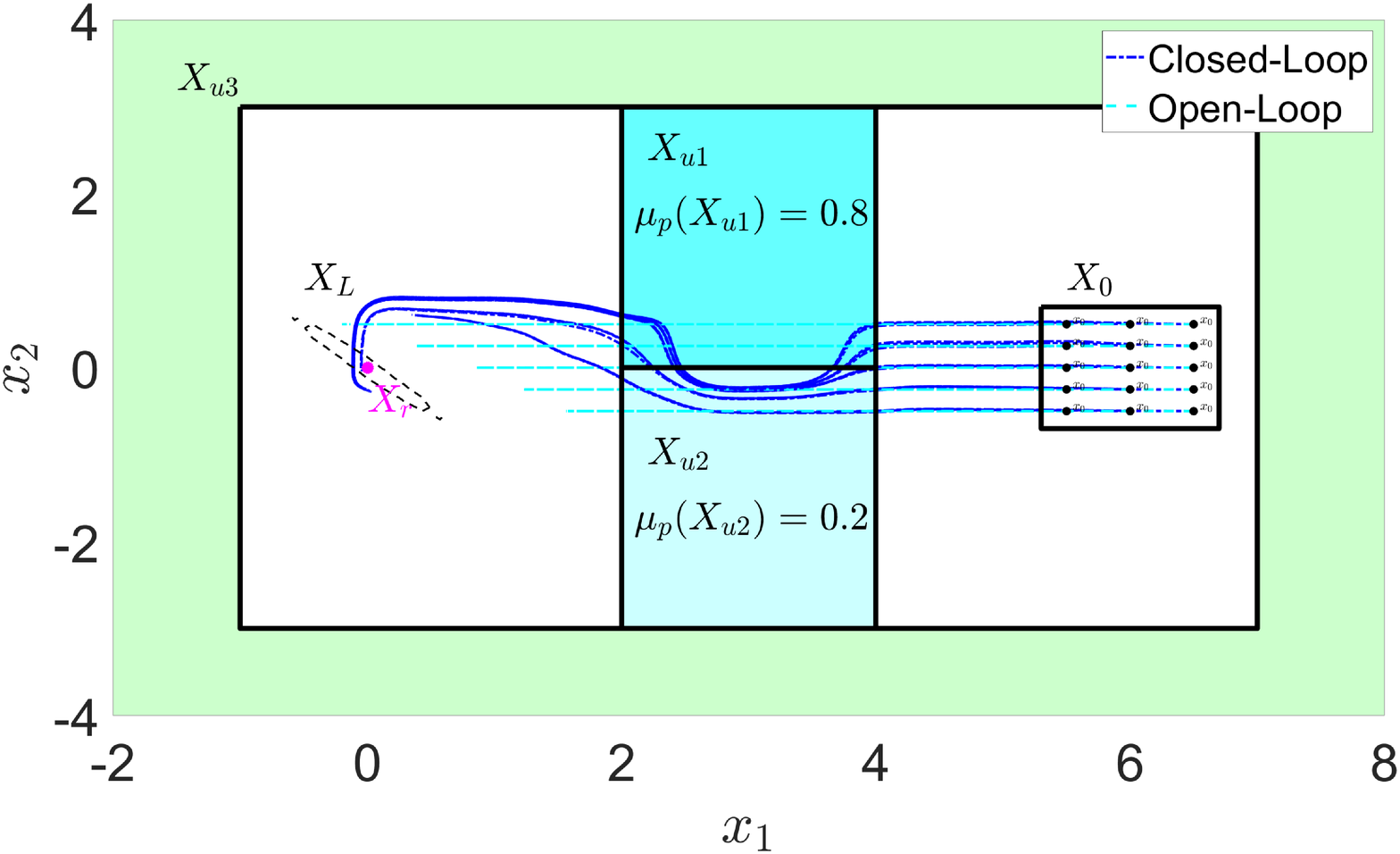}
\caption{State trajectories of Example 2 avoiding unsafe sets $\mathbf{X}_{u1}$ and $\mathbf{X}_{u2}$ with probability measure of 0.8 and 0.2 respectively. } 
\label{exp2_tract_0802}
\end{figure}
\begin{figure}[ht]
\centering
\includegraphics[width=0.9\linewidth]{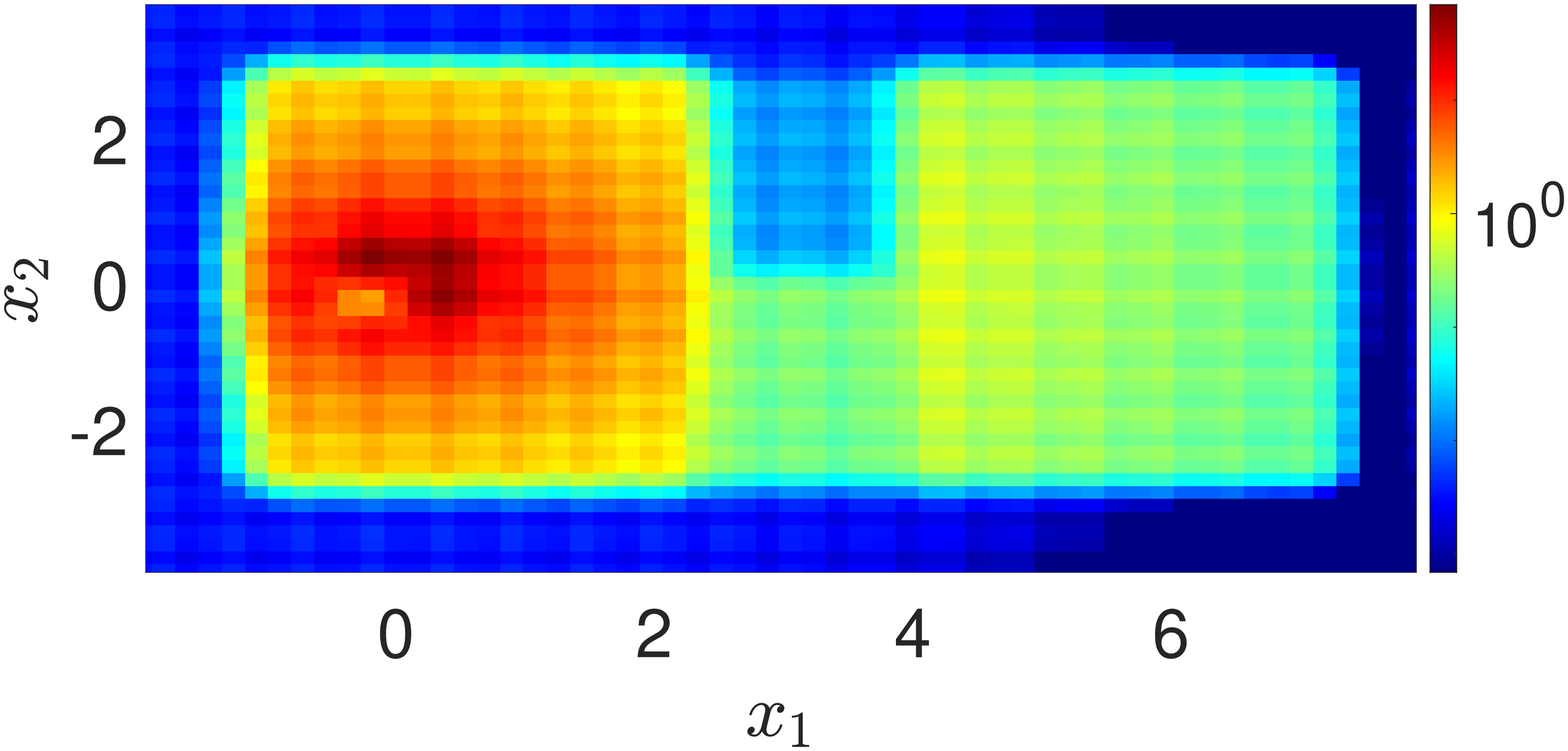}
\caption{$\rho(\bx)$ in Example 2 with respect to probability measure of 0.8 and 0.2 in the unsafe sets $\mathbf{X}_{u1}$ and $\mathbf{X}_{u2}$ respectively. } 
\label{exp2_rho_0802}
\end{figure}
\begin{figure}[ht]
\centering
\includegraphics[width=0.9\linewidth]{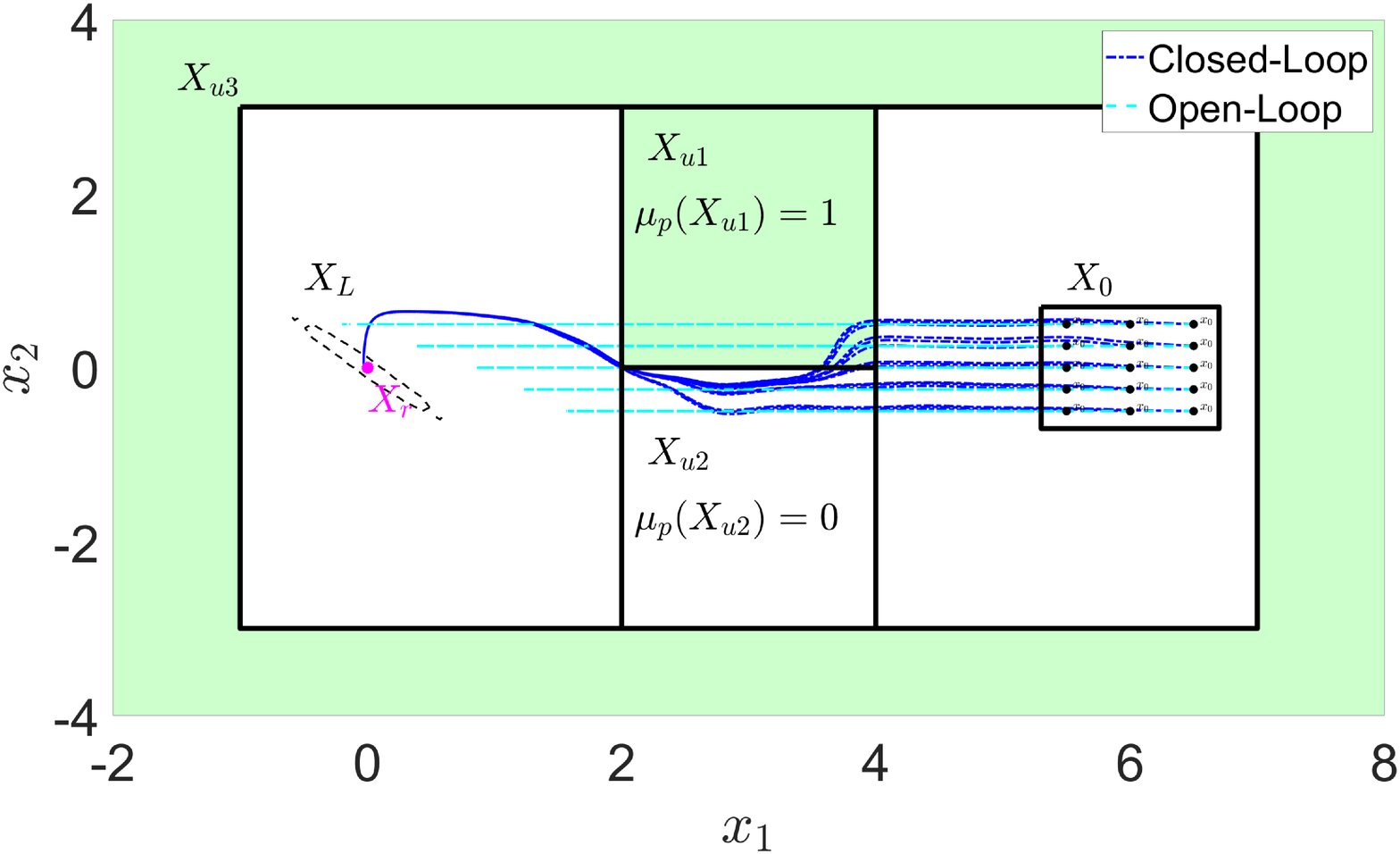}
\caption{State trajectories of Example 2 avoiding unsafe sets $\mathbf{X}_{u1}$ and $\mathbf{X}_{u2}$ with probability measure of 1 and 0 respectively. } 
\label{exp2_tract_100}
\end{figure}
\begin{figure}
\centering
\includegraphics[width=0.9\linewidth]{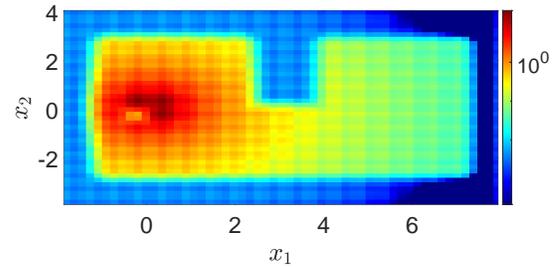}
\caption{$\rho(\bx)$ in Example 2 with respect to probability measure of 1 and 0 in the unsafe sets $\mathbf{X}_{u1}$ and $\mathbf{X}_{u2}$ respectively. } 
\label{exp2_rho_100}
\end{figure}
\section{Conclusions}
We consider the problem of navigation with probabilistic safety constraints. We provide convex formulation to the verification and control synthesis for navigation with probabilistic safety constraints leading to a infinite dimensional convex optimization problem. The finite dimensional approximation of the optimization problem relies on the data-driven approximation of the linear P-F operator. Simulation examples are presented to show the feasibility of the proposed framework. 

\bibliographystyle{IEEEtran}
\bibliography{ref1,ref}
\end{document}